\def\depthgrowth{0pt}
\def\heightgrowth{1pt}
\newsavebox\zbox
\newcommand\zsqrt[1]{%
  \ignoremathstyle
  \savebox\zbox{$#1\rule{0pt}{.7\baselineskip}$}%
  \stretchrel*{\sqrt{\phantom{#1}\kern0.5pt}}%
              {\rule[-\dimexpr\dp\zbox+\depthgrowth]{0pt}{%
                \dimexpr\ht\zbox+\dp\zbox+\depthgrowth+\heightgrowth}}%
  \kern-\wd\zbox\textstyle#1%
}
\newtheorem{lemma}{Lemma}
\newtheorem{theorem}{Theorem}
\newtheorem{definition}{Definition}
\newtheorem{corollary}{Corollary}
\newtheorem{relation}{Relation}
\DeclareMathOperator*{\arginf}{arg\,inf}
\newcommand\pFq[5]{{_{#1}F_{#2}}\left({#3 \atop #4};#5\right)}
\author{Aaron Hendrickson\\ ahendr16@jh.edu\\ \vspace*{-2pt}\\ Claude F. Leibovici\\ cfl-consultant@club-internet.fr}
\title{Exact and approximate solutions to the minimum of $1+x+\cdots+x^{2n}$}
\begin{document}
\maketitle

\begin{abstract}
The polynomial $f_{2n}(x)=1+x+\cdots+x^{2n}$ and its minimizer on the real line $x_{2n}=\arginf f_{2n}(x)$ for $n\in\Bbb N$ are studied. Results show that $x_{2n}$ exists, is unique, corresponds to $\partial_x f_{2n}(x)=0$, and resides on the interval $[-1,-1/2]$ for all $n$. It is further shown that $\inf f_{2n}(x)=(1+2n)/(1+2n(1-x_{2n}))$ and $\inf f_{2n}(x)\in[1/2,3/4]$ for all $n$ with an exact solution for $x_{2n}$ given in the form of a finite sum of hypergeometric functions of unity argument. Perturbation theory is applied to generate rapidly converging and asymptotically exact approximations to $x_{2n}$. Numerical studies are carried out to show how many terms of the perturbation expansion for $x_{2n}$ are needed to obtain suitably accurate approximations to the exact value.
\end{abstract}

\begin{table}[b]\footnotesize\hrule\vspace{1mm}
	Keywords: Lagrange inversion, Tschirnhaus transformation, perturbation series, transcendental roots, hypergeometric function.\\
	2010 Mathematics Subject Classification:
	Primary 65H04, Secondary 65Q30.
\end{table}


\section{Introduction}
\label{sec:introduction}

The inspiration for this work came from a question posted by Wang \cite{wang_4060608} on the \emph{Mathematics Stack Exchange} discussion board March 13, 2021, which sought a solution to the minimum of the polynomial $1+x+\cdots+x^{2n}$ for $n\in\Bbb N$. In the question is was noted that the minimum appeared to correspond to a vanishing derivative and thus could be found by solving for the real roots of $\partial_x(1+x+\cdots x^{2n})$. When $n=1,2$ these roots are algebraic with their exact forms being recovered using the standard formulae for linear and cubic equations. However for $n\geq 3$, the work of Abel and Galois shows no general algebraic solution exists; hence, motivating the need for more powerful methods \cite{MR1577598}. Given the broad and pervasive applications of geometric sums in the literature, further study of this polynomial and its minimum is a worthwhile venture.


\section{Preliminaries}
\label{sec:prelims}

Throughout this work we define $\Bbb N=\{1,2,\dots\}$, $\Bbb N_0=\Bbb N\cup\{0\}$, and $\Bbb E=\{2,4,\dots\}$ to be the sets of positive integers, nonnegative integers, and positive even integers, respectively. For the sake of brevity we shall denote $m=2n$ so that the the polynomial of interest and its minimizer becomes
\[
f_m(x)\coloneqq 1+x+\cdots+x^m,\quad m\in\Bbb E
\]
and
\[
x_m\coloneqq\arginf_{x\in\Bbb R}f_m(x).
\]
The following definitions and relations will also be used.

\begin{definition}[Gamma function]
\label{def:poch_sym}
\[
\Gamma(s) \coloneqq \int_0^\infty t^{s-1}e^{-t}\,\mathrm dt,\quad \Re s>0
\]
\end{definition}


\begin{definition}[Factorial power (falling factorial)]
\label{def:FactorialPower}
\[
(s)^{(n)}\coloneqq \frac{\Gamma(s+1)}{\Gamma(s-n+1)}
\]
\end{definition}


\begin{definition}[Pochhammer symbol (rising factorial)]
\label{def:pochhammer_symbol}
\[
(s)_n \coloneqq \frac{\Gamma(s+n)}{\Gamma(s)}
\]
\end{definition}


\begin{definition}[Generalized hypergeometric series]
\label{def:pFq_function}
\[
\pFq{p}{q}{a_1,\dots,a_p}{b_1,\dots,b_q}{z}\coloneqq\sum_{k=0}^\infty\frac{(a_1)_k\cdots (a_p)_k}{(b_1)_k\cdots (b_q)_k}\frac{z^k}{k!}
\]
\end{definition}


\begin{definition}[$k$-gamma function and Pochhammer $k$-symbol \cite{diaz_2007}]
\label{def:k_gamma_function}
The $k$-gamma function and Pochhammer $k$-symbol are given by
\[
\Gamma_k(x)\coloneqq k^{\frac{x}{k}-1}\Gamma\left(\frac{x}{k}\right)
\]
and
\[
(x)_{n,k}\coloneqq \frac{\Gamma_k(x+nk)}{\Gamma_k(x)},
\]
respectively.
\end{definition}


\begin{relation}
If $n\in\Bbb N_0$ then $(\alpha)_{n,k}=k^n(\alpha/k)_n$ {\normalfont\cite[Prop.~3.1]{JIMS252}}.
\end{relation}


With these definitions at hand we are ready to begin studying the properties of $f_m$ and $x_m$.


\section{Properties of $f_m$ and $x_m$}
\label{sec:fm_xm_properties}

Our first goal is to establish the existence and uniqueness of $x_m$. To accomplish this it will be helpful to use the closed-form for geometric sums and write $f_m$ in the form
\begin{equation}
\label{eq:fn_closed_form}
f_m(x)=\frac{1-x^{m+1}}{1-x}.
\end{equation}

\begin{lemma}
\label{lem:fn_is_strictly_convex}
The polynomial $f_m(x)$ is strictly convex on $x\in\Bbb R$ for all $m\in\Bbb E$.
\end{lemma}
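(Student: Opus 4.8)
The plan is to show that the second derivative $\partial_x^2 f_m(x)$ is strictly positive on all of $\Bbb R$. Since $f_m(x)=1+x+\cdots+x^m$ with $m\in\Bbb E$, differentiating termwise gives
\[
\partial_x^2 f_m(x)=\sum_{k=2}^m k(k-1)x^{k-2}=\sum_{j=0}^{m-2}(j+2)(j+1)x^{j}.
\]
I would split the argument into two cases according to the sign of $x$. For $x\geq 0$ every term in the sum is nonnegative and the constant term ($j=0$) equals $2>0$, so $\partial_x^2 f_m(x)\geq 2>0$ immediately. The substantive case is $x<0$, where the terms alternate in sign and cancellation must be controlled.

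For $x<0$ I would pair consecutive terms. Writing $m-2$ (which is even, since $m\in\Bbb E$) and grouping the sum as
\[
\partial_x^2 f_m(x)=\sum_{j \text{ even}} (j+2)(j+1)x^{j}+\sum_{j \text{ odd}}(j+2)(j+1)x^{j},
\]
note that for $x<0$ the even-$j$ terms are positive and the odd-$j$ terms are negative. Pairing the odd term at $j=2i-1$ with the even term at $j=2i-2$ and using $|x|^{2i-1}\le\text{(something)}$ will not quite work uniformly in $|x|$, so instead I would pair each negative term $ (2i+1)(2i)x^{2i-1}$ with the following positive term $(2i+2)(2i+1)x^{2i}$: their sum is $(2i+1)x^{2i-1}\bigl(2i+(2i+2)x\bigr)$. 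A cleaner route is to use the closed form \eqref{eq:fn_closed_form}: from $f_m(x)(1-x)=1-x^{m+1}$ one can differentiate twice, or better, observe that $f_m'(x)=\frac{mx^{m+1}-(m+1)x^m+1}{(1-x)^2}$ and analyze its monotonicity. The truly clean argument, which I would adopt, is this: since $m$ is even, $x^m\ge 0$ and $f_m(x)=1+x+x^2(1+x+\cdots+x^{m-2})\ge$ — hmm, that recursion is for convexity of $f$ itself, not the cleanest either.

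The cleanest plan: show $\partial_x^2 f_m(x)>0$ directly for $x<0$ by writing, with $m-2=2\ell$,
\[
\partial_x^2 f_m(x)=2+\sum_{i=1}^{\ell}\bigl[(2i+1)(2i)x^{2i-1}+(2i+2)(2i+1)x^{2i}\bigr],
\]
and bounding the possibly-negative bracket. For $-1\le x<0$ each bracket is $(2i+1)x^{2i-1}(2i+(2i+2)x)$; when $x\ge -\tfrac{i}{i+1}$ the bracket is $\le 0$ in a way that — actually for $x\in(-1,0)$ one has $|x^{2i-1}|\le|x|<1$, so the whole tail is dominated by $2$ only if the tail sums to less than $2$, which fails for large $m$. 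Hence I would instead handle $|x|\ge$ some threshold separately using the closed form. \textbf{The main obstacle} is precisely this alternating-sign cancellation for $x$ negative and $|x|$ near $1$: one must either find the right telescoping/pairing that yields a manifestly positive expression, or exploit \eqref{eq:fn_closed_form} to write $\partial_x^2 f_m$ as a rational function whose numerator can be shown positive (e.g. by checking it has no real roots, or by a convexity-preserving operation argument). I expect the intended proof uses the closed form together with the fact that $m$ is even to make the numerator's positivity transparent.
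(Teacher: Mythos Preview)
Your setup is fine: reducing to $\partial_x^2 f_m(x)>0$ and disposing of $x\ge 0$ immediately is exactly right. But for $x<0$ none of your sketched approaches is carried to a conclusion, and you say so yourself. The pairing attempts fail for essentially the reason you noticed: near $x=-1$ the negative terms in $\sum_{j}(j+2)(j+1)x^{j}$ are genuinely large, and no single pairing produces a manifestly positive expression uniformly in $|x|$ and in $m$. So as it stands the proposal has a gap precisely at the only nontrivial point.

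The paper's argument is short and uses a tool you did not consider: Descartes' rule of signs. Starting from the closed form \eqref{eq:fn_closed_form} one computes that, for $x\neq 1$, the equation $\partial_x^2 f_m(x)=0$ is equivalent to $h_m(x)=0$ with
\[
h_m(x)=(m-1)m\,x^{m+1}-2(m^2-1)\,x^m+m(m+1)\,x^{m-1}-2.
\]
Because $m$ is even, the four coefficients of $h_m(-x)$ in descending order of exponent are all negative, so Descartes' rule gives \emph{zero} sign changes and hence no positive roots of $h_m(-x)$; equivalently, $h_m$ has no negative real roots, and $\partial_x^2 f_m$ does not vanish on $(-\infty,0)$. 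A single evaluation, $\partial_x^2 f_m(-1)=\tfrac{1}{2}m^{2}>0$, then forces $\partial_x^2 f_m(x)>0$ on all of $(-\infty,0)$.

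Your instinct that the closed form is the right starting point was correct; the missing idea is to convert ``no roots on $(-\infty,0)$'' into a sign-pattern count rather than trying to bound an alternating sum directly.
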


\begin{proof}
To establish strict convexity it is sufficient to show $f_m^{\prime\prime}(x)>0$ everywhere on $x\in\Bbb R$. It is trivial to show $f_m^{\prime\prime}(x)>0$ holds for $x\geq 0$ so all that is left to do is to consider the complementary case $x<0$. Equating the second derivative with zero we find $f_m^{\prime\prime}(x)=0\iff h_m(x)=0$, where
\[
h_m(x)=(m-1)mx^{m+1}-2(m^2-1)x^m+m(m+1)x^{m-1}-2.
\]
The signs of the coefficients of $h_m(-x)$ in order of descending variable exponent yields the sequence $(-1,-1,-1,-1)$, which are all negative. It follows from Descartes' rule of signs that $f_m^{\prime\prime}(x)$ has zero roots on the interval $x\in(-\infty,0)$. But, $f_m^{\prime\prime}(-1)=\frac{1}{2}m^2>0$; thus, we conclude $f_m^{\prime\prime}(x)>0$ also holds for all $x<0$. The proof is now complete.
\end{proof}

\begin{theorem}
\label{thm:fn_has_a_unique_minimizer}
The minimizer $x_m$ exists, is unique, and resides on the interval $[-1,-1/2]$ for all $m\in\Bbb E$, .
\end{theorem}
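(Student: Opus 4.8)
The plan is to exploit the strict convexity established in Lemma~\ref{lem:fn_is_strictly_convex} together with the behavior of $f_m$ at infinity. First I would note that since $m\in\Bbb E$ is even, $f_m(x)\to+\infty$ as $x\to\pm\infty$, so $f_m$ is coercive on $\Bbb R$; being continuous, it therefore attains a global minimum, and by strict convexity that minimizer is unique. This handles the ``exists and is unique'' portion. Moreover, strict convexity implies $f_m^\prime$ is strictly increasing, so the unique minimizer $x_m$ is precisely the unique solution of $f_m^\prime(x)=0$, which will be convenient for locating it.

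Next I would pin down the interval. Using the closed form \eqref{eq:fn_closed_form}, differentiation gives a workable expression for $f_m^\prime(x)$; alternatively one can work directly with $f_m^\prime(x)=1+2x+\cdots+mx^{m-1}$. To show $x_m\in[-1,-1/2]$ it suffices, by monotonicity of $f_m^\prime$, to check the two sign conditions $f_m^\prime(-1)\le 0$ and $f_m^\prime(-1/2)\ge 0$. At $x=-1$ one computes $f_m^\prime(-1)=\sum_{k=1}^{m}k(-1)^{k-1}=-m/2<0$ (pairing consecutive terms, using that $m$ is even). At $x=-1/2$ one must show $f_m^\prime(-1/2)\ge 0$; here the cleanest route is to differentiate the closed form $f_m(x)=(1-x^{m+1})/(1-x)$, obtaining
\[
f_m^\prime(x)=\frac{mx^{m+1}-(m+1)x^m+1}{(1-x)^2},
\]
so the sign of $f_m^\prime(-1/2)$ is the sign of $g(m):=m(-1/2)^{m+1}-(m+1)(-1/2)^m+1$. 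Since $m$ is even, $(-1/2)^m=2^{-m}$ and $(-1/2)^{m+1}=-2^{-m-1}$, giving $g(m)=1-2^{-m}\bigl(\tfrac{m}{2}+m+1\bigr)=1-2^{-m}\bigl(\tfrac{3m}{2}+1\bigr)$, which is nonnegative iff $2^m\ge \tfrac{3m}{2}+1$; this holds for all $m\ge 4$ by an easy induction, and $m=2$ is checked directly (indeed $g(2)=0$, consistent with $x_2=-1/2$).

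I would then assemble these facts: $f_m^\prime$ is continuous and strictly increasing with $f_m^\prime(-1)<0\le f_m^\prime(-1/2)$, so its unique zero $x_m$ lies in $[-1,-1/2]$, and by the convexity argument this zero is the global minimizer. The main obstacle is the elementary but slightly fussy inequality $2^m\ge \tfrac{3m}{2}+1$ for even $m$ (equivalently the verification that $f_m^\prime(-1/2)\ge 0$), together with being careful about the boundary case $m=2$ where equality $x_2=-1/2$ occurs; everything else is a direct consequence of coercivity plus Lemma~\ref{lem:fn_is_strictly_convex}.
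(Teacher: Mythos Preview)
Your argument is correct and very close to the paper's own proof: both compute $f_m^\prime(-1)=-m/2<0$ and $f_m^\prime(-1/2)\ge 0$ via the closed form \eqref{eq:fn_closed_form} (your inequality $2^m\ge \tfrac{3m}{2}+1$ is exactly the paper's $2^{m+1}-3m-2\ge 0$), and both invoke Lemma~\ref{lem:fn_is_strictly_convex} to connect the zero of $f_m^\prime$ to the global minimum.

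The one genuine difference is in how uniqueness of the critical point is obtained. The paper first argues, via Descartes' rule of signs applied to $g_m(-x)=mx^{m+1}-(m+1)x^m+1$ with $x\mapsto -x$, that $f_m^\prime$ has exactly one negative root, and only afterwards appeals to convexity to identify that root as the minimizer. You bypass Descartes entirely: strict convexity already gives $f_m^{\prime\prime}>0$, hence $f_m^\prime$ is strictly increasing, so it has at most one zero, and coercivity of $f_m$ supplies existence. Your route is slightly more economical—Descartes' rule is redundant once Lemma~\ref{lem:fn_is_strictly_convex} is in hand—while the paper's version has the mild advantage of establishing the root count for $f_m^\prime$ independently of the second-derivative analysis.
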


\begin{proof}
We begin by establishing that $f_m^\prime(x)$ has exactly one real root. It is immediately obvious that $f_m^\prime(x)>0$ for all $x\geq 0$. Now assuming $x<0$, we deduce $f_m^\prime(x)=0\iff g_m(x)=0$, where $g_m(x)=mx^{m+1}-(m+1)x^m+1$. The signs of the coefficients of $g_m(-x)$ in order of descending variable exponent gives the sequence $(-1,-1,+1)$; revealing a single variation in sign. Again appealing to Descartes' rule of signs we conclude $f_m^\prime(x)$ must have exactly one real root on the interval $x\in(-\infty,0)$. However, $f_m^\prime(-1)=-\frac{1}{2}m$ and $f_m^\prime(-1/2)=\frac{1}{9}2^{1-m}(2^{m+1}-3m-2)\geq 0$ with the latter inequality following from induction on $m\in\Bbb E$. Consequently, $f_m^\prime(x)$ has a single root on the real line contained in the interval $[-1,-1/2]$ for all $m\in\Bbb E$. Furthermore, the strict convexity of $f_m$ proven in Lemma \ref{lem:fn_is_strictly_convex} implies that the solution to $f_m^\prime(x)=0$ also corresponds to the unique global minimum of $f_m$, which completes the proof.
\end{proof}

With the existence and uniqueness of $x_m$ established, we turn to finding a simple formula for the minimum of $f_m$ as a function of $x_m$.

\begin{lemma}
\label{lem:fmin_explicit_form}
Let $x_m\in[-1,-1/2]$ denote the unique minimizer of $f_m$ such that $f_m(x_m)=\inf_{x\in\Bbb R}f_m(x)$. Then,
\[
f_m(x_m)=\frac{1+m}{1+m(1-x_m)}
\]
and $f_m(x_m)\in[1/2,3/4]$ for all $m\in\Bbb E$ with $f_2(x_2)=3/4$ and $\lim_{m\to\infty}f_m(x_m)=1/2$.
\end{lemma}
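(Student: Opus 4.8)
The plan is to derive the identity $f_m(x_m)=(1+m)/(1+m(1-x_m))$ by exploiting the stationarity condition $g_m(x_m)=0$, and then to locate $f_m(x_m)$ in $[1/2,3/4]$ by studying the endpoint values and monotonicity in $m$. First I would start from the closed form \eqref{eq:fn_closed_form}, $f_m(x)=(1-x^{m+1})/(1-x)$, valid for $x\neq 1$ (and $x_m\in[-1,-1/2]$ is certainly not $1$). Since $x_m$ satisfies $f_m'(x_m)=0$, equivalently $g_m(x_m)=m x_m^{m+1}-(m+1)x_m^m+1=0$ as established in the proof of Theorem \ref{thm:fn_has_a_unique_minimizer}, I would solve this relation for $x_m^{m+1}$, getting $x_m^{m+1}=\bigl((m+1)x_m^m-1\bigr)/m$, and substitute into $f_m(x_m)=(1-x_m^{m+1})/(1-x_m)$. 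A short simplification should collapse this to an expression still involving $x_m^m$; to eliminate $x_m^m$ as well, I would also use $x_m^m=(x_m^{m+1}-?)$—more cleanly, note $g_m(x_m)=0$ can be rewritten as $x_m^m(m x_m-(m+1))=-1$, i.e. $x_m^m=\dfrac{1}{(m+1)-m x_m}=\dfrac{1}{1+m(1-x_m)}$. Feeding this back, $x_m^{m+1}=x_m\cdot x_m^m=\dfrac{x_m}{1+m(1-x_m)}$, and hence
\[
f_m(x_m)=\frac{1-x_m^{m+1}}{1-x_m}=\frac{1}{1-x_m}\left(1-\frac{x_m}{1+m(1-x_m)}\right)=\frac{1}{1-x_m}\cdot\frac{1+m(1-x_m)-x_m}{1+m(1-x_m)}.
\]
The numerator $1+m(1-x_m)-x_m=(1-x_m)+m(1-x_m)=(1+m)(1-x_m)$, so the $(1-x_m)$ factors cancel and we obtain exactly $f_m(x_m)=(1+m)/(1+m(1-x_m))$, as claimed.

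For the range statement, I would view $f_m(x_m)$ through this formula as a function of $x_m\in[-1,-1/2]$: the map $t\mapsto (1+m)/(1+m(1-t))$ is increasing in $t$ on that interval, so $f_m(x_m)$ lies between its value at $t=-1$, namely $(1+m)/(1+2m)$, and its value at $t=-1/2$, namely $(1+m)/(1+3m/2)=2(1+m)/(2+3m)$. Both of these bounds lie in $(1/2,1)$ for $m\in\Bbb E$; in particular $(1+m)/(1+2m)>1/2$ and $2(1+m)/(2+3m)<1$, giving $f_m(x_m)\in(1/2,1)$ immediately, but to sharpen the upper bound to $3/4$ I need more than the crude endpoint bound on $x_m$. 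Here I would instead establish monotonicity of $m\mapsto f_m(x_m)$ directly: since $f_{m+2}(x)\ge f_m(x)$ fails pointwise in general, the cleaner route is to note $f_{m+2}(x_{m+2})=\inf f_{m+2}\le f_{m+2}(x_m)=f_m(x_m)+x_m^{m+1}+x_m^{m+2}=f_m(x_m)+x_m^{m+1}(1+x_m)$, and since $x_m\in[-1,-1/2]$ we have $1+x_m\ge 0$ and $x_m^{m+1}\le 0$ (odd exponent, negative base), so $x_m^{m+1}(1+x_m)\le 0$, yielding $f_{m+2}(x_{m+2})\le f_m(x_m)$. Thus the sequence is non-increasing in $m\in\Bbb E$, so its maximum over $m$ is attained at $m=2$, where a direct computation with the cubic gives $x_2=-1/2$ and $f_2(x_2)=3/4$. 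Combined with the universal lower bound $f_m(x_m)\ge (1+m)/(1+2m)>1/2$, this gives $f_m(x_m)\in[1/2,3/4]$.

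Finally, for the limit, from $f_m(x_m)=(1+m)/(1+m(1-x_m))$ with $x_m\in[-1,-1/2]$ I have $1-x_m\in[3/2,2]$, so $f_m(x_m)$ is squeezed between $(1+m)/(1+2m)\to 1/2$ and $(1+m)/(1+3m/2)\to 2/3$; this alone is not tight enough, so I need $x_m\to -1$ as $m\to\infty$. To see that, observe $x_m^m=1/(1+m(1-x_m))\to 0$ since $1-x_m$ is bounded below by $3/2$; but if $x_m$ stayed bounded away from $-1$, say $x_m\le -1+\delta$, then... actually $|x_m|<1$ would force $x_m^m\to0$ for free, so that is not the point — rather I want $|x_m|\to 1$. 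Suppose instead $\limsup_m |x_m|=1-\epsilon<1$ along a subsequence; then along it $x_m^m\to 0$ and the relation $x_m^{m+1}=((m+1)x_m^m-1)/m$ gives $x_m^{m+1}\to 0$ too, consistent, so this does not immediately contradict. The clean argument: from $g_m(x_m)=0$, $1=(m+1)x_m^m-mx_m^{m+1}=x_m^m\bigl((m+1)-mx_m\bigr)$, so $|x_m|^m=1/\bigl((m+1)-mx_m\bigr)\ge 1/(1+2m)$, whence $|x_m|\ge (1+2m)^{-1/m}\to 1$; since $x_m<0$ this forces $x_m\to -1$, hence $1-x_m\to 2$ and $f_m(x_m)=(1+m)/(1+m(1-x_m))\to 1/2$. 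The main obstacle I anticipate is precisely this last step — getting the sharp asymptotic $x_m\to -1$ rather than a mere bound — and verifying the $m=2$ base case $x_2=-1/2$ cleanly from the cubic $2x^3-3x^2+1=0$, which factors as $(x-1)^2(2x+1)$, so indeed $x_2=-1/2$.
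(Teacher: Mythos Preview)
Your derivation of the identity $f_m(x_m)=(1+m)/(1+m(1-x_m))$ is essentially the same as the paper's: both rewrite $g_m(x_m)=0$ as $x_m^{m+1}=x_m/(1+m(1-x_m))$ and substitute into the closed form \eqref{eq:fn_closed_form}.

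For the range and the limit, however, you take a genuinely different route. The paper simply optimises $f(m,x)=(1+m)/(1+m(1-x))$ over the rectangle $(m,x)\in\Bbb E\times[-1,-1/2]$, reading off $\sup f=f(2,-1/2)=3/4$ and $\inf f=\lim_{m\to\infty}f(m,-1)=1/2$, and then identifies these with $f_2(x_2)$ and $\lim_{m\to\infty}f_m(x_m)$; this is quick but leaves the latter identification only implicitly justified (it tacitly uses $x_m\to-1$). You instead (i) prove that $m\mapsto f_m(x_m)$ is non-increasing via the clean inequality $f_{m+2}(x_{m+2})\le f_{m+2}(x_m)=f_m(x_m)+x_m^{m+1}(1+x_m)\le f_m(x_m)$, pinning the maximum at $m=2$, and (ii) prove $x_m\to-1$ explicitly from $|x_m|^m=1/(1+m(1-x_m))\ge 1/(1+2m)$, hence $|x_m|\ge(1+2m)^{-1/m}\to 1$. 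Your argument is longer but delivers more: monotonicity of the minimum in $m$ (not stated in the paper) and a fully rigorous limit. One small remark: you assert that the crude endpoint bound $f_m(x_m)\le 2(1+m)/(2+3m)$ is not sharp enough for the upper bound $3/4$, but in fact it is---this quantity is decreasing in $m$ and equals $3/4$ at $m=2$---so your monotonicity step, while correct and informative, was not strictly required there.
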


\begin{proof}
From Theorem \ref{thm:fn_has_a_unique_minimizer} we know that $x_m$ satisfies $mx_m^{m+1}-(m+1)x_m^m+1=0$, which can be rewritten as $x_m^{m+1}=x_m/(1+m(1-x_m))$. Substituting this expression for $x_m^{m+1}$ into (\ref{eq:fn_closed_form}) yields the desired form for $f_m(x_m)$. The bounds on $f_m(x_m)$ are then found by minimizing and maximizing $f(m,x)=(1+m)/(1+m(1-x))$ on $(m,x)\in\Bbb E\times[-1,-1/2]$. We find $\inf f(m,x)=\lim_{m\to\infty}f(m,-1)=1/2$ and $\sup f(m,x)=f(2,-1/2)=3/4$, which are equivalent to $\lim_{m\to\infty}f_m(x_m)$ and $f_2(x_2)$, respectively. The proof is now complete.
\end{proof}


\section{Explicit expression for $x_m$}
\label{sec:xm_exact_form}

In the previous section we showed that the minimizer $x_m$ exists, is unique, and resides on the interval $[-1,-1/2]$ for all $m\in\Bbb E$. Furthermore, we were able to establish a very simple expression for $\inf f_m$ as a function of this minimizer so that the problem of evaluating $\inf f_m$ is equivalent to finding $x_m$. For $m=2,4$ we may apply the standard equations for roots of linear and cubic equations to derive exact algebraic expressions for $x_m$. Furthermore, as $m\to\infty$ we find for $|x|<1$: $f_m(x)\to(1-x)^{-1}$, which is strictly increasing on $x\in(-1,-1/2]$. Bringing these observations together we have
\[
\begin{aligned}
x_2 &=-\frac{1}{2}\\
x_4 &=-\frac{1}{4} \left(1+\sqrt[3]{5/9} \left(\sqrt[3]{9+4 \sqrt{6}}-\sqrt[3]{4\sqrt{6}-9}\right)\right)\\
&\ \,\vdots\\
x_\infty &=-1.
\end{aligned}
\]

While a general algebraic solution for $x_m$ with $m\geq 6$ does not exist, methods for expressing exact solutions to higher-order polynomial roots have been thoroughly studied \cite{Umemura2007}. For example, the work of Hermite shows that $x_6$ can be solved exactly in terms of nonelementary functions \cite{hermite1858}. One way this is accomplished is by reducing the quintic equation $\partial_x f_6(x)=0$ to its Bring--Jerrard normal form and then using series reversion to express $x_6$ in terms of hypergeometric functions. Using this approach as a clue, Theorem \ref{thm:xm_closed_form} presents an exact and general solution for $x_m$ based on an adaptation of the method used for solving trinomial equations \cite{glasser_2000,Eagle_1939,Ritelli_2021}.

\begin{theorem}
\label{thm:xm_closed_form}
For all $m\in\Bbb E$
\[
x_m=\sum_{k=1}^m\frac{(-m)^{k-2}}{(1+m)^{\frac{mk+k}{m}-1}}\frac{\Gamma\left(\frac{mk+k}{m}-1\right)}{\Gamma\left(\frac{m+k}{m}\right)\Gamma(k)}%
\pFq{m+2}{m+1}{1,\{\frac{k}{m}+\frac{\ell-1}{m+1}\}_{\ell=0}^m}{\frac{m+k}{m},\{\frac{k+\ell}{m}\}_{\ell=0}^{m-1}}{1}.
\]
\end{theorem}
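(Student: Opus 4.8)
The plan is to convert the defining equation of $x_m$ into a fixed‑point equation to which Lagrange inversion applies, read off the resulting power series, and then repackage it—using the Gauss multiplication formula for the Gamma function (equivalently, the Pochhammer $k$‑symbol relation recorded above)—into the asserted finite sum of hypergeometric functions. First I would invoke Theorem~\ref{thm:fn_has_a_unique_minimizer}: $x_m$ is the unique real root of $mx^{m+1}-(m+1)x^m+1=0$ in $[-1,-\tfrac12]$, equivalently of $x^m\big((m+1)-mx\big)=1$. Since $x_m<0$ and $m$ is even, writing $x=-y$ (so $y_m:=-x_m\in[\tfrac12,1]$) turns this into $y^m\big((m+1)+my\big)=1$, which in turn is equivalent to the fixed‑point form $y=\lambda_m\,\Phi(y)$ with $\Phi(y)=\big(1+\tfrac{m}{m+1}y\big)^{-1/m}$ and $\lambda_m=(m+1)^{-1/m}$. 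As $\Phi$ is analytic at $0$ with $\Phi(0)=1$, Lagrange inversion yields the unique power‑series solution $Y(\lambda)=\sum_{K\ge1}\tfrac{\lambda^K}{K}\,[u^{K-1}]\Phi(u)^K$, and a short computation, $[u^{K-1}]\big(1+\tfrac{m}{m+1}u\big)^{-K/m}=\binom{-K/m}{K-1}\big(\tfrac{m}{m+1}\big)^{K-1}=\tfrac{(-1)^{K-1}(K/m)_{K-1}}{(K-1)!}\big(\tfrac{m}{m+1}\big)^{K-1}$, gives $Y(\lambda)=\sum_{K\ge1}\tfrac{(-1)^{K-1}(K/m)_{K-1}}{K!}\big(\tfrac{m}{m+1}\big)^{K-1}\lambda^{K}$.

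The step I expect to be the main obstacle is justifying that this series, evaluated at the specific value $\lambda=\lambda_m$, equals $y_m$: a Stirling estimate shows its radius of convergence is exactly $\lambda_m$, so one is sitting on the boundary of the disk of convergence. I would handle this in two parts. First, the coefficients of $Y$ decay like $K^{-3/2}$ at $\lambda_m$ (matching the parameter excess $\tfrac12$ of the ${}_{m+2}F_{m+1}$'s appearing in the statement), so the series converges absolutely at $\lambda_m$. Second, to identify its value, observe that for real $\lambda>0$ the polynomial $P(y)=y^m\big((m+1)+my\big)$ has $P'(y)=m(m+1)y^{m-1}(1+y)>0$ on $(0,\infty)$, so $P(y)=(m+1)\lambda^m$ has a unique positive solution $Y(\lambda)$, depending continuously on $\lambda$ up to and including $\lambda=\lambda_m$, where it equals $y_m$; since $Y(\lambda)$ agrees with the Lagrange series for $0<\lambda<\lambda_m$, Abel's theorem forces the series at $\lambda_m$ to equal $\lim_{\lambda\to\lambda_m^-}Y(\lambda)=y_m$. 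Hence
\[
x_m=-y_m=\sum_{K\ge1}\frac{(-1)^{K}\,(K/m)_{K-1}}{K!}\cdot\frac{m^{K-1}}{(m+1)^{\,K-1+K/m}}.
\]

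It remains to rearrange this absolutely convergent series into the claimed double sum. Write each $K\ge1$ uniquely as $K=k+mi$ with $1\le k\le m$ and $i\ge0$. Using $\tfrac{(K/m)_{K-1}}{K!}=\tfrac{\Gamma(K-1+K/m)}{m\,\Gamma(1+K/m)\,\Gamma(K)}$ together with the elementary splittings $\Gamma(K-1+\tfrac Km)=\Gamma(k-1+\tfrac km)\,(k-1+\tfrac km)_{(m+1)i}$, $\Gamma(1+\tfrac Km)=\Gamma(1+\tfrac km)(1+\tfrac km)_i$, $\Gamma(K)=\Gamma(k)(k)_{mi}$, $m^{K-1}=m^{k-1}m^{mi}$, $(m+1)^{K-1+K/m}=(m+1)^{k-1+k/m}(m+1)^{(m+1)i}$, and $(-1)^K=(-1)^k$ (as $m$ is even), the $k$‑dependent factors assemble into exactly the prefactor $\frac{(-m)^{k-2}}{(1+m)^{(mk+k)/m-1}}\frac{\Gamma((mk+k)/m-1)}{\Gamma((m+k)/m)\Gamma(k)}$. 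For the $i$‑sum I would apply the multiplication identity $(\beta)_{Ni}=N^{Ni}\prod_{\ell=0}^{N-1}\big(\tfrac{\beta+\ell}{N}\big)_i$ to $(k-1+\tfrac km)_{(m+1)i}$ with $N=m+1$ and to $(k)_{mi}$ with $N=m$; checking $\tfrac{(k-1+k/m)+\ell}{m+1}=\tfrac km+\tfrac{\ell-1}{m+1}$, the factors $(m+1)^{(m+1)i}$ and $m^{mi}$ so produced cancel the powers already present, and the $i$‑sum collapses to
\[
\pFq{m+2}{m+1}{1,\{\tfrac km+\tfrac{\ell-1}{m+1}\}_{\ell=0}^{m}}{\tfrac{m+k}{m},\{\tfrac{k+\ell}{m}\}_{\ell=0}^{m-1}}{1}
\]
after inserting the trivial pair $(1)_i/i!=1$. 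Combining the prefactor and the $i$‑sum over $k$ gives the theorem; apart from the boundary‑convergence argument of the second paragraph, every remaining step is routine, the longest being the multiplication‑theorem bookkeeping just sketched.
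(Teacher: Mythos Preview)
Your approach is essentially the same as the paper's: Lagrange inversion applied to a rewriting of $mx^{m+1}-(m+1)x^m+1=0$, yielding exactly the series \eqref{eq:xm_Lagrange_series} (your $K$ is the paper's $n+1$), followed by splitting modulo $m$ and the Gauss multiplication formula for Pochhammer symbols. The only substantive difference is cosmetic---you invert $y=\lambda\Phi(y)$ for $y=-x_m$ while the paper inverts $\zeta=(1+m)+m\zeta^{-1/m}$ for $\zeta=(-x_m)^{-m}$---and you supply the boundary-convergence justification (the $K^{-3/2}$ decay plus Abel's theorem) that the paper's proof omits and only addresses later, in the proof of Theorem~\ref{thm:xm_closed_form_2}.
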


\begin{proof}
From Theorem \ref{thm:fn_has_a_unique_minimizer} we know $x_m$ satisfies
\[
mx_m^{m+1}-(m+1)x_m^m+1=0,\quad m\in\Bbb E.
\]
Performing the substitution $x_m\mapsto -\zeta^{-\frac{1}{m}}$ we obtain the transformed expression
\begin{equation}
\label{eq:xformed_root_eq}
\zeta=1+m+m\phi(\zeta),
\end{equation}
with $\phi(\zeta)=\zeta^{-\frac{1}{m}}$. By Lagrange's inversion theorem it follows for a function $F$ analytic in a neighborhood of the root of (\ref{eq:xformed_root_eq}) that
\[
F(\zeta)=F(1+m)+\sum_{n=1}^\infty\frac{m^n}{n!}\left[\partial_w^{n-1}F^\prime(w)\phi^n(w)\right]_{w=1+m}.
\]
Choosing $F(\zeta)=-\zeta^{-\frac{1}{m}}$ we subsequently obtain
\[
x_m=-(1+m)^{-\frac{1}{m}}+\sum_{n=1}^\infty\frac{m^{n-1}}{n!}\left[\partial_w^{n-1}w^{-\frac{m+n+1}{m}}\right]_{w=1+m},
\]
which upon further noting that $\partial_w^{n-1}w^{-s}=(-1)^{n-1}(s)_{n-1}w^{-s-n+1}$ yields after some algebraic manipulation
\begin{equation}
\label{eq:xm_Lagrange_series}
x_m=-\frac{(1+m)^{-\frac{1}{m}}}{m}\sum_{n=0}^\infty\frac{\Gamma\left(\frac{mn+n+1}{m}\right)}{\Gamma\left(\frac{m+n+1}{m}\right)}\frac{\bigl(-m(1+m)^{-\frac{m+1}{m}}\bigr)^n}{n!}.
\end{equation}
To evaluate the series (\ref{eq:xm_Lagrange_series}) we write $x_m=\sum_{n=0}^\infty c_n=\sum_{k=1}^m\sum_{n=0}^\infty c_{mn+k-1}$; resulting in $m$ new series containing Pochhammer symbols of the form $(\cdot)_{(m+1)n}$ and $(\cdot)_{mn}$. Then using the identity \cite[Eq.~2.13]{JIMS252}
\[
(\alpha)_{rn}=r^{rn}\prod_{j=0}^{r-1}\left(\frac{\alpha+j}{r}\right)_n,\quad r\in\Bbb N
\]
we arrive at
\[
x_m=\sum_{k=1}^m\frac{(-m)^{k-2}}{(1+m)^{\frac{mk+k}{m}-1}}\frac{\Gamma\left(\frac{mk+k}{m}-1\right)}{\Gamma\left(\frac{m+k}{m}\right)\Gamma(k)}%
\sum_{n=0}^\infty\frac{(1)_n\prod_{\ell=0}^m\bigl(\frac{k}{m}+\frac{\ell-1}{m+1}\bigr)_n}{\left(\frac{m+k}{m}\right)_n\prod_{\ell=0}^{m-1}\left(\frac{k+\ell}{m}\right)_n}\frac{1}{n!},
\]
which is the desired result.
\end{proof}

To demonstrate the validity of the closed-form for $x_m$ given by Theorem \ref{thm:xm_closed_form} we substitute $m=2$ and find
\[
x_2=\frac{1}{9}\pFq{3}{2}{1,\frac{2}{3},\frac{4}{3}}{2,\frac{3}{2}}{1}-\frac{1}{\sqrt 3}\pFq{2}{1}{\frac{1}{6},\frac{5}{6}}{\frac{3}{2}}{1}.
\]
The $_3F_2(\cdot)$ term is reduced to $_2F_1(\cdot)$ by way of \cite[Eq.~07.27.03.0120.01]{wolfram_functions}
\[
\pFq{3}{2}{1,\beta,\gamma}{2,\epsilon}{z}=\frac{\epsilon-1}{(\beta-1)(\gamma-1)z}\left(\pFq{2}{1}{\beta-1,\gamma-1}{\epsilon-1}{z}-1\right).
\]
Gauss's hypergeometric summation theorem
\[
\pFq{2}{1}{\alpha,\beta}{\gamma}{1}=\frac{\Gamma(\gamma)\Gamma(\gamma-\alpha-\beta)}{\Gamma(\gamma-\alpha)\Gamma(\gamma-\beta)},\quad\Re(\gamma-\alpha-\beta)>0
\]
then permits us to write the remaining $_2F_1(\cdot)$ terms as ratios of gamma functions. After some simplification we find
\[
x_2=-\frac{1}{2},
\]
which is the exact value of $x_2$.

For $m\geq 4$, reducing the closed-form for $x_m$ to more elementary functions in this manner becomes very cumbersome if not impossible. Without the ability to reduce the hypergeometric functions present in $x_m$, this expression also becomes difficult to implement numerically, especially as $m$ becomes large. To obtain approximations we could turn to the series given by (\ref{eq:xm_Lagrange_series}); however, the slow convergence of this series renders it impractical. For example, substituting $m=2$ and adding up the first one-hundred terms of (\ref{eq:xm_Lagrange_series}) we obtain $x_2\approx -0.499885$, which corresponds to an absolute relative error of $2.3\times 10^{-4}$. Given that numerical root finding methods can achieve more accurate approximations in just a few iterations we find this means of approximation to be less than satisfactory.


\section{Perturbation series expansion of $x_m$}
\label{sec:xn_pert_series_approx}

In the previous section we were able to find an exact expression for $x_m$ but this expression was not useful for the purpose of computing numerical approximations. Here, we apply the methods of perturbation theory to obtain a faster converging series expansion for this purpose.

We begin by recalling from Theorem \ref{thm:fn_has_a_unique_minimizer} that $x_m$ satisfies
\[
g_m(x_m)=0,\quad\text{with}\quad g_m(x)=x^m\left(1-x+\tfrac{1}{m}\right)-\tfrac{1}{m}
\]
and $x_m\to -1$ as $m\to\infty$. The fact that $x_m+1$ vanishes as $m$ becomes large suggests we instead study the perturbed problem
\[
g_{m,\epsilon}(x_{m,\epsilon})=0,\quad\text{with}\quad g_{m,\epsilon}(x)=x^m\left(2-(1+x)\epsilon+\tfrac{1}{m}\right)-\tfrac{1}{m},
\]
where
\begin{equation}
\label{eq:pert_series_gen_form}
x_{m,\epsilon}=\sum_{k=0}^\infty a_k\epsilon^k.
\end{equation}
Upon inspection we observe $g_{m,1}(x)=g_m(x)$ and so it follows that $x_m$ can be recovered by evaluating the perturbation series (\ref{eq:pert_series_gen_form}) at $\epsilon=1$. To determine the coefficients $a_k$ we first consider the well-known result for integer powers of series to express powers of $x_{m,\epsilon}$ as
\[
x_{m,\epsilon}^p=\sum_{k=0}^\infty c_{k,p}\epsilon^k,\quad p\in\Bbb N
\]
with
\[
\begin{aligned}
c_{0,p} &=a_0^p\\
c_{k,p} &=\frac{1}{a_0k}\sum_{\ell=1}^k((p+1)\ell-k)a_\ell c_{k-\ell,p}.
\end{aligned}
\]
Using Fa\'{a} di Bruno's formula we may also obtain a closed-form for the coefficients $c_{k,p}$ as
\[
c_{k,p}=\frac{1}{k!}\sum_{\ell=1}^k(p)^{(\ell)}a_0^{p-l}B_{k,\ell}(1!\, a_1,\dots,(k-\ell+1)!\,a_{k-\ell+1}),
\]
where $B_{n,k}(x_1,\dots,x_{n-k+1})$ is the partial Bell-polynomial. Using these results we substitute $x_{m,\epsilon}$ into $g_{m,\epsilon}$ and collect terms by powers of $\epsilon$ yielding
\[
g_{m,\epsilon}(x_{m,\epsilon})=\left(2+\tfrac{1}{m}\right)a_0^m-\tfrac{1}{m}+\sum_{k=1}^\infty\left[\left(2+\tfrac{1}{m}\right)c_{k,m}-c_{k-1,m}-c_{k-1,m+1}\right]\epsilon^k.
\]
Since $g_{m,\epsilon}(x_{m,\epsilon})=0$ we equate the coefficients of $\epsilon^k$ with zero to yield an infinite system of equations that recover the coefficients $a_k$. Setting the constant term equal to zero gives $a_0^m=(1+2m)^{-1}$. Knowing that $x_m\in[-1,-1/2]$ and $m\in\Bbb E$ we take the negative solution to this equation and set the higher-order coefficients of $\epsilon^k$ to zero yielding
\begin{equation}
\label{eq:recurrence_relation}
a_0=-(1+2m)^{-\frac{1}{m}},\quad (1+2m)c_{k,m}-m(c_{k-1,m}+c_{k-1,m+1})=0.
\end{equation}
Evaluating the first several coefficients we are able to conjecture a closed-form for $a_k$, which leads to the following result.

\begin{theorem}
\label{thm:xm_closed_form_2}
For all $m\in\Bbb E$
\[
x_m=\sum_{k=0}^\infty a_k,
\]
where
\[
\begin{aligned}
a_0 &=-(1+2m)^{-\frac{1}{m}}\\
a_k &=\sum_{\ell=0}^k\frac{(\ell+m+1)_{k-1,m}}{\ell! (k-\ell)!}a_0^{mk+\ell+1}.
\end{aligned}
\]
\end{theorem}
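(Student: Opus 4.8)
The plan is to verify that the closed form for $a_k$ given in the statement satisfies the recurrence \eqref{eq:recurrence_relation} together with the initial condition $a_0=-(1+2m)^{-1/m}$, and then invoke uniqueness of the solution to that recurrence (the coefficients $c_{k,m}$ depend only on $a_0,\dots,a_k$, and in fact $c_{k,m}$ involves $a_k$ linearly with nonzero coefficient $m a_0^{m-1}/a_0 = m a_0^{m-2}$, so each $a_k$ is determined recursively from the lower ones). Since the perturbation series evaluated at $\epsilon=1$ recovers $x_m$ by the discussion preceding the theorem, establishing the formula for $a_k$ completes the proof. So the whole argument reduces to a single induction: assuming $a_0,\dots,a_{k-1}$ are given by the stated formula, show that the $a_k$ forced by \eqref{eq:recurrence_relation} also has that form.

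First I would record the two representations of $c_{k,p}$ and commit to the Faà di Bruno / Bell-polynomial form, since it expresses $c_{k,p}$ directly in terms of the $a_j$'s rather than recursively. The key computational step is to substitute the conjectured $a_j=\sum_{\ell=0}^j \frac{(\ell+m+1)_{j-1,m}}{\ell!(j-\ell)!}a_0^{mj+\ell+1}$ into $c_{k-1,m}+c_{k-1,m+1}$ and into $c_{k,m}$, and simplify using the Pochhammer-$k$-symbol machinery from Section~\ref{sec:prelims} — in particular the relation $(\alpha)_{n,k}=k^n(\alpha/k)_n$ and the splitting identity $(\alpha)_{rn}=r^{rn}\prod_{j=0}^{r-1}\bigl(\tfrac{\alpha+j}{r}\bigr)_n$ used in the proof of Theorem~\ref{thm:xm_closed_form}. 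Because $a_0^m=(1+2m)^{-1}$, every power $a_0^{mk+\ell+1}$ collapses to $(1+2m)^{-k}a_0^{\ell+1}$, which is what makes the factor $(1+2m)$ on the left of \eqref{eq:recurrence_relation} cancel cleanly against the $m(\cdots)$ on the right; tracking this cancellation is the crux. I would organize the double sums by the exponent of $a_0$ and match coefficients of $a_0^{\ell+1}$ on both sides, reducing the identity to a finite combinatorial sum over the Bell-polynomial indices.

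An alternative, and possibly cleaner, route is to go back to the Lagrange-inversion series \eqref{eq:xm_Lagrange_series} (or rather its $\epsilon$-deformation): set up the inversion for $g_{m,\epsilon}$ directly, obtain a double series in $\epsilon$ and the Lagrange index, and then read off $a_k$ as the coefficient of $\epsilon^k$. This trades the Bell-polynomial bookkeeping for a binomial-theorem expansion of $(2-(1+x)\epsilon+1/m)$-type factors and a reindexing; the appearance of $(\ell+m+1)_{k-1,m}$ in the answer strongly suggests this is where the Pochhammer-$k$-symbol naturally arises, via $\partial_w^{\,k-1}w^{-s}=(-1)^{k-1}(s)_{k-1}w^{-s-k+1}$ as in the earlier proof. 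I expect the main obstacle either way to be purely notational stamina: correctly handling the two nested finite sums (the $\ell$-sum inside $a_k$ and the $\ell$-sum from the power-series formula for $c_{k,m}$), keeping the Pochhammer-$k$ indices aligned, and confirming the boundary terms $\ell=0$ and $\ell=k$ behave. A useful sanity check to include is re-deriving $a_1,a_2$ from both the recurrence and the closed form, and confirming that at $m=2$, $\epsilon=1$ the series converges to $-1/2$ in agreement with the exact value computed from Theorem~\ref{thm:xm_closed_form}.
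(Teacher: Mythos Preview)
Your plan is workable in principle but takes a genuinely different route from the paper, and it carries one gap you should close. The paper does \emph{not} verify the recurrence \eqref{eq:recurrence_relation} at all. Instead it writes the conjectured coefficients as the double sum
\[
\sum_{k=0}^\infty\sum_{\ell=0}^k\frac{m^{k-1}}{\ell!(k-\ell)!}\,\frac{\Gamma\!\bigl(k+\tfrac{\ell+1}{m}\bigr)}{\Gamma\!\bigl(1+\tfrac{\ell+1}{m}\bigr)}\,a_0^{mk+\ell+1},
\]
interchanges the order of summation, and observes that the inner $k$-sum is exactly a ${_1F_0}$ which collapses via $(1-z)^{-\alpha}$ with $z=ma_0^{m}=-m/(1+2m)$. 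After reinserting $a_0$ this reproduces the Lagrange series \eqref{eq:xm_Lagrange_series} term by term, and hence equals $x_m$ by Theorem~\ref{thm:xm_closed_form}. No Bell polynomials, no induction on $k$, no matching of coefficients of $a_0^{\ell+1}$: just one swap of sums and one hypergeometric identity. What the paper's route buys is both brevity and, crucially, a built-in proof of convergence, since the rearranged series is already known to converge absolutely to $x_m$.

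That last point is the gap in your proposal. Even if you successfully verify that the stated $a_k$ satisfy the recurrence (which, as you note, determines the $a_k$ uniquely), you have only shown that they are the formal perturbation coefficients. The sentence ``the perturbation series evaluated at $\epsilon=1$ recovers $x_m$ by the discussion preceding the theorem'' is not justified by that discussion: the preamble sets up the perturbed problem formally but never establishes that $\sum_k a_k\epsilon^k$ has radius of convergence at least $1$, nor that its value at $\epsilon=1$ is the real root of $g_m$. You would need a separate argument for this (e.g.\ a direct estimate $|a_k|\le C\rho^k$ with $\rho<1$, or an appeal to the analytic implicit function theorem for $g_{m,\epsilon}$ near $\epsilon=1$). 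Your ``alternative route'' via Lagrange inversion on $g_{m,\epsilon}$ is actually much closer to what the paper does and would naturally supply convergence; if you pursue the recurrence-verification route instead, be explicit that convergence still needs to be proved.
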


\begin{proof}
We begin by considering the closed-form for $x_m$ claimed in the statement of Theorem \ref{thm:xm_closed_form}, which consists of a sum of $m$ hypergeometric functions. Denoting $\{a_{j,k}\}_{j=1}^{m+2}$ as the top parameters and $\{b_{j,k}\}_{j=1}^{m+1}$ as the bottom parameters of the hypergeometric function in the $k$th term we find $\gamma_k=(b_1+\cdots+b_{m+1})-(a_1+\dots+a_{m+2})=\frac{1}{2}$ for all $k=1,\dots,m$. Since $\gamma_k>0$, each of the $m$-terms of $x_m$ can be written as an absolutely convergent series; hence, the entire expression representing $x_m$ must also be absolutely convergent. Now using the conjectured closed-form for $a_k$ we write
\[
x_m=\sum_{k=0}^\infty\sum_{\ell=0}^k\frac{m^{k-1}}{\ell! (k-\ell)!}\frac{\Gamma\left(k+\frac{\ell+1}{m}\right)}{\Gamma\left(1+\frac{\ell+1}{m}\right)}a_0^{mk+\ell+1}.
\]
If this expression is equal to that given in the statement of Theorem \ref{thm:xm_closed_form}, then it is also absolutely convergent and permits rearrangement of its terms. Interchanging the order of summation we find after some simplification
\[
x_m=\frac{a_0}{m}\sum_{\ell=0}^\infty\frac{\Gamma\left(\frac{m\ell+\ell+1}{m}\right)}{\Gamma\left(1+\frac{\ell+1}{m}\right)}\frac{(ma_0^{m+1})^\ell}{\ell !}\sum_{k=0}^\infty\left(\tfrac{m\ell+\ell+1}{m}\right)_k\frac{(ma_0^m)^k}{k!}.
\]
The interior sum over $k$ can now be evaluated in terms of ${_1F}_0(\alpha;-;z)=(1-z)^{-\alpha}$. Reintroducing $a_0$ yields
\[
x_m=-\frac{(1+m)^{-\frac{1}{m}}}{m}\sum_{\ell=0}^\infty\frac{\Gamma\left(\frac{m\ell+\ell+1}{m}\right)}{\Gamma\left(\frac{m+\ell+1}{m}\right)}\frac{\bigl(-m(1+m)^{-\frac{m+1}{m}}\bigr)^\ell}{\ell !},
\]
which is the series expansion for $x_m$ given in (\ref{eq:xm_Lagrange_series}). By the uniqueness of Taylor series it follows that the conjectured form for $a_k$ must be correct.
\end{proof}

So does the perturbation series for $x_m$ converge faster than that given by (\ref{eq:xm_Lagrange_series})? Substituting $m=2$ and adding the first one-hundred terms we find for the absolute relative error $5.6\times 10^{-64}$, which is a significant improvement on the absolute relative error of $2.3\times 10^{-4}$ obtained from the first one hundred terms of (\ref{eq:xm_Lagrange_series}).

We conclude this section with an important property of approximations for $x_m$ obtained via the perturbation series of Theorem \ref{thm:xm_closed_form_2}.

\begin{corollary}
\label{cor:xmn_is_asymptotically_exact}
If $\tilde x_{m,n}=\sum_{k=0}^na_k$, then $x_m\sim\tilde x_{m,n}$ as $m\to\infty$.
\end{corollary}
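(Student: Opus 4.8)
The plan is to show that the remainder of the perturbation series after the first $n+1$ terms is asymptotically negligible compared to $\tilde x_{m,n}$ itself as $m\to\infty$, i.e.\ that $\sum_{k=n+1}^\infty a_k = o(\tilde x_{m,n})$. The natural first step is to obtain the leading-order behaviour of each individual coefficient $a_k$ as $m\to\infty$ using the closed form from Theorem~\ref{thm:xm_closed_form_2}. Since $a_0=-(1+2m)^{-1/m}$, we have $a_0\to -1$ and, more precisely, $a_0 = -1 + \frac{\ln(2m)}{m} + O\!\big((\ln m/m)^2\big)$. For $k\geq 1$, the dominant contribution to $a_k=\sum_{\ell=0}^k\frac{(\ell+m+1)_{k-1,m}}{\ell!(k-\ell)!}a_0^{mk+\ell+1}$ comes from estimating the Pochhammer $k$-symbol: by the Relation in Section~\ref{sec:prelims}, $(\ell+m+1)_{k-1,m}=m^{k-1}\big(\tfrac{\ell+m+1}{m}\big)_{k-1}=m^{k-1}\big(1+\tfrac{\ell+1}{m}\big)_{k-1}\sim m^{k-1}\cdot (k-1)!$ as $m\to\infty$ (since each factor tends to an integer between $1$ and $k-1$... actually each factor $1+\tfrac{\ell+1}{m}+j\to 1+j$, so the product tends to $(k-1)!$). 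Meanwhile $a_0^{mk+\ell+1}\sim (-1)^{mk+\ell+1}(2m)^{-k}$ because $a_0^m=-(1+2m)^{-1}$. Combining, $a_k\sim (-1)^{mk+1}\,m^{k-1}(k-1)!\,(2m)^{-k}\sum_{\ell=0}^k\frac{(-1)^\ell}{\ell!(k-\ell)!} = (-1)^{mk+1}\frac{(k-1)!}{2^k\,m\,k!}\cdot\frac{(1-1)^k}{?}$; I would need to handle the alternating sum $\sum_\ell\binom{k}{\ell}(-1)^\ell=0$ carefully — this vanishing means the leading term cancels and one must go to the next order in the $m$-expansion of each factor, which is where the real work lies.

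Granting that careful bookkeeping, the key structural point is that $a_0$ is order $1$ while every $a_k$ for $k\geq 1$ is of order $m^{-k}$ (or smaller, after the cancellation is accounted for) times logarithmic corrections — so $\tilde x_{m,n}\to a_0 = -1\neq 0$, and hence $\tilde x_{m,n}$ is bounded away from $0$ for large $m$. Simultaneously, the tail $\sum_{k=n+1}^\infty a_k$ must be shown to be $o(1)$, which follows from a uniform bound on $|a_k|$ of the form $|a_k|\leq C\,\rho^k/m$ for some $\rho<1$ and large $m$ (this should be extractable from the same estimates, using that $\big(1+\tfrac{\ell+1}{m}\big)_{k-1}\leq (k-1)!\,e^{O(k^2/m)}$ and the geometric decay supplied by the $(2m)^{-k}$ factor dominating the $m^{k-1}$ growth). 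Then $\sum_{k=n+1}^\infty|a_k| = O(1/m) = o(\tilde x_{m,n})$, giving $x_m = \tilde x_{m,n} + o(\tilde x_{m,n})$, i.e.\ $x_m\sim\tilde x_{m,n}$.

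An alternative, possibly cleaner route avoids summing the tail directly: one can argue from Theorem~\ref{thm:fn_has_a_unique_minimizer} that $x_m\to -1$ as $m\to\infty$ (since $x_m\in[-1,-1/2]$ and $g_m(x)=x^m(1-x+1/m)-1/m=0$ forces $x_m^m\to 0$, hence $|x_m|\to 1$, hence $x_m\to -1$), and independently that $\tilde x_{m,n}\to a_0\to -1$. Since both $x_m$ and $\tilde x_{m,n}$ tend to the \emph{same nonzero} limit $-1$, their ratio tends to $1$, which is exactly the claim $x_m\sim\tilde x_{m,n}$. This reduces the whole corollary to two facts: (i) $x_m\to -1$, already essentially contained in the setup of Section~\ref{sec:xn_pert_series_approx}; and (ii) $\tilde x_{m,n}\to -1$ for each fixed $n$, which follows from $a_0\to -1$ together with $a_k\to 0$ for each fixed $k\geq 1$ as $m\to\infty$ — and $a_k\to 0$ is immediate from the closed form since $(\ell+m+1)_{k-1,m}a_0^{mk}=m^{k-1}\big(1+\tfrac{\ell+1}{m}\big)_{k-1}\cdot(-(1+2m)^{-1})^k$ is $O(m^{k-1}/m^k)=O(1/m)$ and the remaining factor $a_0^{\ell+1}$ is bounded.

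The main obstacle in the first approach is the delicate cancellation in the alternating binomial sum, which forces one to track subleading terms in the large-$m$ expansion of both the Pochhammer $k$-symbol and the power $a_0^{mk+\ell+1}$; this is the kind of computation that is easy to get wrong. The second approach sidesteps that entirely, so I would present the argument via the common nonzero limit $-1$: it is shorter, more transparent, and the only genuinely new ingredient beyond the excerpt is the elementary observation that $x_m\to-1$, which is already flagged in the text. The one subtlety to state carefully there is that ``$\sim$'' as $m\to\infty$ means ratio tending to $1$, and that this is legitimate precisely because the common limit $-1$ is nonzero.
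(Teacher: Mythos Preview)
Your proposal is correct, and the alternative route you ultimately recommend---showing $x_m\to-1$ and $\tilde x_{m,n}\to-1$ via $a_0\to-1$, $a_k\to0$ for $k\geq1$, then concluding from the common nonzero limit---is exactly the paper's proof. Your first approach is unnecessary extra work, as you yourself recognize.
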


\begin{proof}
Using the expression for $a_k$ given in Theorem \ref{thm:xm_closed_form_2} we have $\lim_{m\to\infty}a_0=-1$ and $\lim_{m\to\infty}a_k=0$ for all $k\geq1$; thus, $\lim_{m\to\infty}\tilde x_{m,n}=-1$ for all $n\in\Bbb N_0$. Since $\lim_{m\to\infty}x_m=-1$ the result follows.
\end{proof}


\section{Numerical results}
\label{sec:numerical analysis}

From Corollary \ref{cor:xmn_is_asymptotically_exact} we know $x_m\sim\tilde x_{m,n}$ as $m\to\infty$ and so we expect the number $n$ needed to guarantee $|x_m-\tilde x_{m,n}|<\epsilon$ should decrease as $m$ increases. Since we have closed-forms for $x_2$ and $x_4$, which can be computed to arbitrary precision, our first task will be to study the convergence of $\tilde x_{m,n}\to x_m$ as a function of $n$ for $m=2,4$. Given that we expect less terms will be needed for larger values of $m$, the results of this exercise should give us a worst case scenario for how large $n$ must be to obtain the desired accuracy in our approximation.

Using {\sc Mathematica} software, we evaluated $\tilde x_{m,n}$ for $m=2,4$ and $n=0,1,\dots, 100$. To compare the approximation to the exact values we used the absolute relative error
\[
R_m(n)=\left\lvert\frac{\tilde x_{m,n}}{x_m}-1\right\rvert,
\]
the results of which are plotted in Figure \ref{fig:abs_rel_error}. From the figure we see the error decreases exponentially with $n$ and that $R_4(n)<R_2(n)$ for each value of $n$. Working with the data for $R_4(n)$ we further determined
\[
R_4(n)<5\times 10^{-(2+0.759n)},
\]
which suggests setting $n$ equal to
\[
n^\ast=\max\left\{0,\left\lceil\tfrac{q-2}{0.759}\right\rceil\right\}
\]
is sufficient to guarantee $\tilde x_{m,n^\ast}$ agrees with $x_m$ to at least $q$ significant digits for all $m\geq4$.

To test this hypothesis we first note that $x_m\in(-1,-1/2]$ for all finite $m\in\Bbb E$. Since the leading exponent in the decimal expansion of $x_m$ is always negative one it follows that $\tilde x_{m,n}$ has $p$ significant digits of $x_m$ if $|x_m-\tilde x_{m,n}|\leq 5\times 10^{-(p+1)}$. Furthermore, we know $x_m$ is the unique real root of $g_m(x)=x^m(1-x+\frac{1}{m})-\frac{1}{m}$ with $g_m(x_m-\epsilon)$ and $g_m(x_m+\epsilon)$ differing in sign; hence a lower bound on the number of significant digits obtained by $\tilde x_{m,n}$ is found by determining the largest nonnegative integer $p$ such that
$$
g_m(\tilde x_{m,n}-5\times 10^{-(p+1)})g_m(\tilde x_{m,n}+5\times 10^{-(p+1)})\leq 0.
$$

For the sake of example we chose $q=10$ for the number of desired significant digits resulting in $n^\ast=11$. Using the above mentioned procedure, the value $p$ was computed for $m=4,6,\dots,100$ with the results presented in Figure \ref{fig:NsigFigs_q_10}. From the figure we observe $p>q$ for each value of $m$ as is expected. Finally, Table \ref{tbl:fmin_data} presents numerical values for $x_m$ and $f_m(x_m)$ computed using $\tilde x_{m,n}$ and the formula in Lemma \ref{lem:fmin_explicit_form}.

\begin{figure}[htb]
\centering
\includegraphics[scale=1]{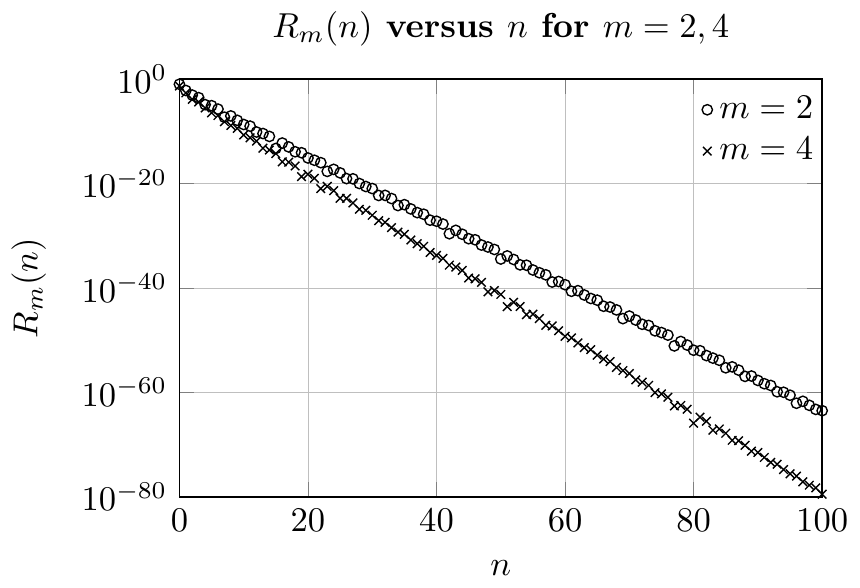}
\caption{Absolute relative error incurred from the approximation $\tilde x_{m,n}$ versus $n$ for $m=2,4$. Plot produced with matlab2tikz \cite{schlomer_2021}.}
\label{fig:abs_rel_error}
\end{figure}

\begin{figure}[htb]
\centering
\includegraphics[scale=1]{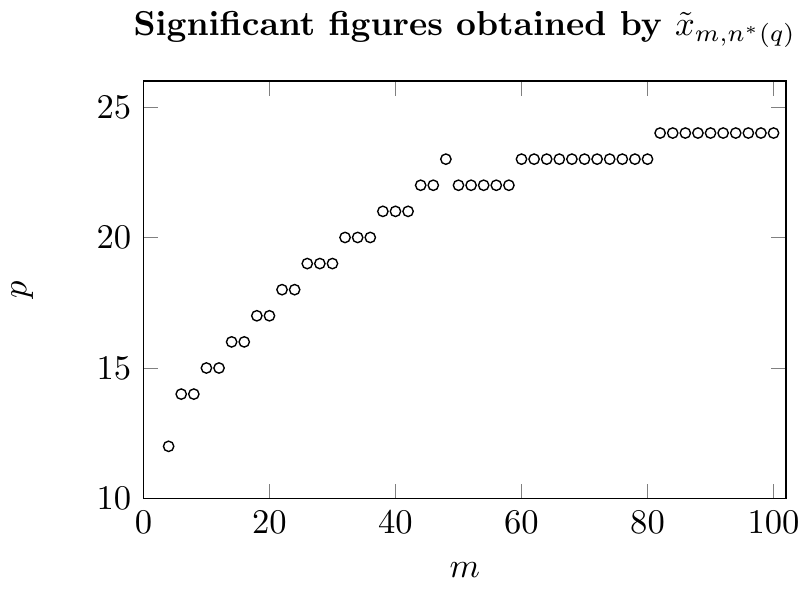}
\caption{Number of significant figures obtained by the approximation $\tilde x_{m,n^\ast}(q)$ for $q=10$ versus $m$.}
\label{fig:NsigFigs_q_10}
\end{figure}

\begin{table}[htb]
\centering
\caption{Numerical values for $x_m$ and $f_m(x_m)$.}
\vspace*{3pt}
\begin{tabular}{ccc}\toprule
$m$ & $x_m$ & $f_m(x_m)$\\\midrule
 2 & -0.5000000000 & 0.7500000000 \\
 4 & -0.6058295862 & 0.6735532235 \\
 6 & -0.6703320476 & 0.6350938940 \\
 8 & -0.7145377272 & 0.6115666906 \\
 10 & -0.7470540749 & 0.5955429324 \\
 12 & -0.7721416355 & 0.5838576922 \\
 14 & -0.7921778546 & 0.5749221276 \\
 16 & -0.8086048979 & 0.5678463037 \\
 18 & -0.8223534102 & 0.5620909079 \\
 20 & -0.8340533676 & 0.5573090540 \\
 22 & -0.8441478047 & 0.5532669587 \\
 24 & -0.8529581644 & 0.5498010211 \\
 26 & -0.8607238146 & 0.5467931483 \\
 28 & -0.8676269763 & 0.5441558518 \\
 30 & -0.8738090154 & 0.5418228660 \\
 32 & -0.8793814184 & 0.5397430347 \\
 34 & -0.8844333818 & 0.5378762052 \\
 36 & -0.8890371830 & 0.5361903986 \\
 38 & -0.8932520563 & 0.5346598151 \\
 40 & -0.8971270425 & 0.5332633990 \\
 42 & -0.9007031162 & 0.5319837878 \\
 44 & -0.9040147981 & 0.5308065300 \\
 46 & -0.9070913919 & 0.5297194951 \\
 48 & -0.9099579456 & 0.5287124219 \\
 50 & -0.9126360054 & 0.5277765690 \\
 52 & -0.9151442141 & 0.5269044410 \\
 54 & -0.9174987898 & 0.5260895727 \\
 56 & -0.9197139122 & 0.5253263565 \\
 58 & -0.9218020367 & 0.5246099035 \\
 60 & -0.9237741513 & 0.5239359311 \\
 62 & -0.9256399895 & 0.5233006711 \\
 64 & -0.9274082062 & 0.5227007942 \\
 66 & -0.9290865244 & 0.5221333471 \\
 68 & -0.9306818591 & 0.5215957008 \\
 70 & -0.9322004214 & 0.5210855067 \\
 72 & -0.9336478067 & 0.5206006599 \\
 74 & -0.9350290699 & 0.5201392683 \\
 76 & -0.9363487901 & 0.5196996259 \\\bottomrule
\end{tabular}\ \ \ 
\begin{tabular}{ccc}\toprule
$m$ & $x_m$ & $f_m(x_m)$\\\midrule
 78 & -0.9376111258 & 0.5192801905 \\
 80 & -0.9388198625 & 0.5188795643 \\
 82 & -0.9399784542 & 0.5184964771 \\
 84 & -0.9410900592 & 0.5181297723 \\
 86 & -0.9421575717 & 0.5177783938 \\
 88 & -0.9431836485 & 0.5174413759 \\
 90 & -0.9441707340 & 0.5171178332 \\
 92 & -0.9451210804 & 0.5168069528 \\
 94 & -0.9460367670 & 0.5165079864 \\
 96 & -0.9469197164 & 0.5162202447 \\
 98 & -0.9477717091 & 0.5159430910 \\
 100 & -0.9485943966 & 0.5156759367 \\
 102 & -0.9493893132 & 0.5154182363 \\
 104 & -0.9501578860 & 0.5151694840 \\
 106 & -0.9509014444 & 0.5149292100 \\
 108 & -0.9516212282 & 0.5146969770 \\
 110 & -0.9523183955 & 0.5144723780 \\
 112 & -0.9529940289 & 0.5142550329 \\
 114 & -0.9536491420 & 0.5140445872 \\
 116 & -0.9542846846 & 0.5138407092 \\
 118 & -0.9549015479 & 0.5136430885 \\
 120 & -0.9555005690 & 0.5134514340 \\
 122 & -0.9560825347 & 0.5132654729 \\
 124 & -0.9566481855 & 0.5130849489 \\
 126 & -0.9571982191 & 0.5129096209 \\
 128 & -0.9577332933 & 0.5127392623 \\
 130 & -0.9582540286 & 0.5125736594 \\
 132 & -0.9587610115 & 0.5124126108 \\
 134 & -0.9592547961 & 0.5122559267 \\
 136 & -0.9597359069 & 0.5121034274 \\
 138 & -0.9602048403 & 0.5119549436 \\
 140 & -0.9606620669 & 0.5118103146 \\
 142 & -0.9611080328 & 0.5116693885 \\
 144 & -0.9615431615 & 0.5115320215 \\
 146 & -0.9619678551 & 0.5113980772 \\
 148 & -0.9623824957 & 0.5112674259 \\
 150 & -0.9627874469 & 0.5111399447 \\
 $\infty$ &-1.0000000000 &0.5000000000\\\bottomrule
\end{tabular}
\label{tbl:fmin_data}
\end{table}


\section{Conclusions}
\label{sec:conclusions}

In this note, we were able to establish many useful facts about the polynomial $f_m(x)=1+x+\cdots+x^m$ and its minimum value on the real line. In particular, we were able to show $\arginf f_m(x)\in[-1,-1/2]$ and $\inf f_m(x)\in[1/2,3/4]$ for all $m\in\Bbb E$ as well as provide a very simple formula for the minimum as a function of the minimizer $x_m$. Lagrange inversion and perturbation theory were applied to derive two different series expansions for $x_m$, which lead to a closed-form in terms of hypergeometric functions. Furthermore, numerical studies were conducted which gave a rule of thumb for how large $n$ must be to achieve a desired accuracy in approximating $x_m$ with $\tilde x_{m,n}$.


\bibliographystyle{plain}
\bibliography{mybibfile}

\end{document}